\newtheorem{theorem}{Theorem}[section]
\newtheorem{propo}[theorem]{Proposition}
\newtheorem{lem}[theorem]{Lemma}
\newtheorem{coro}[theorem]{Corollary}
\newtheorem{define}[theorem]{Definition}
\newcommand{\F}{\mathbb F}
\begin{document}

\begin{frontmatter}

\title{A note on primitive $1-$normal elements over finite fields}
\author{Lucas Reis}
\ead{lucasreismat@gmail.com}
\address{Departamento de Matem\'{a}tica, Universidade Federal de Minas Gerais, UFMG, Belo Horizonte, MG, 30123-970, Brazil}
\begin{abstract}
Let $q$ be a prime power of a prime $p$, $n$ a positive integer and $\F_{q^n}$ the finite field with $q^n$ elements. The $k-$normal elements over finite fields were introduced and characterized by Huczynska {\it et al} (2013). Under the condition that $n$ is not divisible by $p$, they obtained an existence result on primitive $1-$normal elements of $\F_{q^n}$ over $\F_q$ for $q>2$. In this note, we extend their result to the excluded case $q=2$.
\end{abstract}
\begin{keyword}
{Finite Fields, Normal Basis, k-normal elements, Primitive elements}

2010 MSC: 12E20 \sep 11T06
\end{keyword}
\end{frontmatter}

\section{Introduction}
Let $\F_{q^n}$ be the finite field with $q^n$ elements, where $q$ is a prime power and $n$ is a positive integer. Recall that an element $\alpha\in \F_{q^n}$ is said to be normal over $\F_q$ if $A=\{\alpha, \alpha^q, \cdots, \alpha^{q^{n-1}}\}$ is a basis of $\F_{q^n}$ over $\F_q$; A is called a normal basis. Normal basis are frequently used in cryptography and computer algebra systems; sometimes it is useful to take normal basis composed by primitive elements, i.e., generators of the multiplicative group $\F_{q^n}^*$. The {\it Primitive Normal Basis Theorem} states that for any extension field $\F_{q^n}$ of $\F_q$, there exists a basis composed by primitive normal elements; this result was first proved by Lenstra and Schoof \cite{lenstra} and a proof without the use of a computer was latter given in \cite{cohen}. 

A characterization of normal elements is given in (\cite{lidl}, Theorem 2.39): an element $\alpha\in \F_{q^n}$ is normal over $\F_q$ if and only if the polynomials $$g_\alpha(x):= \sum_{i=0}^{n-1}\alpha^{q^i}x^{n-1-i}\quad\text{and}\quad x^n-1,$$ are relatively prime over $\F_{q^n}$. Motivated by this characterization, in \cite{panario}, the authors introduce $k-$normal elements:
\begin{define}
Let $\alpha\in \F_{q^n}^*$ and $g_{\alpha}(x)=\sum_{i=0}^{n-1}\alpha^{q^i}\cdot x^{n-1-i}$. We say that $\alpha$ is $k-$normal over $\F_q$ if the greatest common divisor of $x^n-1$ and $g_{\alpha}(x)$ over $\F_{q^n}$ has degree $k$.
\end{define}
From definition, $0-$normal elements correspond to normal elements in the usual sense. In the same paper, the authors give a characterization of $k-$normal elements and find a formula for their number. Also, they obtain an existence result on primtive $1-$normal elements:

\begin{theorem}[\cite{panario}, Theorem 5.10] \label{mainpanario}
Let $q=p^e$ be a prime power and $n$ a positive integer not divisible by $p$. Assume that $n\ge 6$ if $q\ge 11$ and that $n\ge 3$ if $3\le q\le 9$. Then there exists a primitive $1-$normal element of $\F_{q^n}$ over $\F_q$.
\end{theorem}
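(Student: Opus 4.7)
The plan is to apply the hybrid character-sum plus sieve machinery that has become standard for existence problems involving primitive elements with prescribed $\F_q$-linear structure. First I would translate $1$-normality into the language of the $\F_q[x]$-module structure on $\F_{q^n}$ in which $x$ acts as the Frobenius $\sigma\colon\beta\mapsto\beta^q$, so that $\F_{q^n}\simeq\F_q[x]/(x^n-1)$ as modules. Under this identification, the $\F_q$-order of $\alpha$ is the unique monic divisor $f_\alpha$ of $x^n-1$ in $\F_q[x]$ annihilating $\alpha$. A short computation linking $g_\alpha$ to the $q$-linearized polynomial associated with $f_\alpha$ gives $\deg\gcd(g_\alpha,x^n-1)=n-\deg f_\alpha$, so $\alpha$ is $1$-normal if and only if $f_\alpha=(x^n-1)/(x-a)$ for some $a\in\F_q$ with $a^n=1$ (here $p\nmid n$ ensures these are the simple linear factors of $x^n-1$ over $\F_q$).

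Fix such an $a$ and set $h_a=(x^n-1)/(x-a)$. I would count primitive $\alpha$ with $f_\alpha\mid h_a$ and then apply Möbius over divisors of $h_a$ to extract the exact equality. The indicator of primitivity is expanded by the usual sum over multiplicative characters of $\F_{q^n}^*$ of conductor dividing $q^n-1$, and the indicator ``$f_\alpha$ divides $g$'' is expanded by the additive, module-theoretic analogue using additive characters of $\F_{q^n}$ precomposed with the $q$-linearized polynomial of $g$. Combining the two expansions, the count $N_a$ of primitive $\alpha$ with $f_\alpha=h_a$ becomes a weighted sum of hybrid Gauss sums
\[
\sum_{\alpha\in\F_{q^n}^*}\chi(\alpha)\,\psi\bigl(L_g(\alpha)\bigr),
\]
each of modulus at most $q^{n/2}$ by the Weil bound whenever the relevant character pair is nontrivial.

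Separating the principal term from the error leads to an inequality of the shape
\[
N\geq\theta(q^n-1)\,\Theta(h_a)\,q^n-W(q^n-1)\,W(h_a)\,q^{n/2},
\]
where $\theta$ and $\Theta$ are the multiplicative and module-theoretic analogues of Euler's totient density and $W(\cdot)$ counts squarefree divisors in the relevant ring. Summing over admissible $a$ and applying Cohen's sieve to peel off the largest prime factors of $q^n-1$ and the largest irreducible factors of $x^n-1$ over $\F_q$ reduces the positivity problem to a criterion of the form $q^{n/2}>C(q,n)$ with $C(q,n)$ controlled by $\omega(q^n-1)$ and the number of $\F_q$-irreducible factors of $x^n-1$.

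The main obstacle is making this criterion pass uniformly above the stated thresholds while handling the remaining sporadic pairs. For $q\geq11$ and $n\geq6$, a crude bound on $W(x^n-1)$ together with a logarithmic bound on $\omega(q^n-1)$ already suffices after the sieve; for $3\leq q\leq9$ one needs a finer sieve removing small primes dividing $q^n-1$ and a careful treatment of the small irreducible factors of $x^n-1$, after which a short finite residual list of $(q,n)$ is cleared by direct computation. The exclusion of $q=2$ is precisely where the totient densities $\theta$ and $\Theta$ degrade most severely, and closing that gap is the purpose of the present note.
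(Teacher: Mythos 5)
This theorem is quoted from \cite{panario} and is not proved in this note, but your proposal reconstructs essentially the same argument that the paper attributes to it: the Lenstra--Schoof hybrid character-sum method (multiplicative characters for primitivity, additive characters composed with linearized polynomials for the $\F_q$-order), followed by a sieve reducing existence to an inequality of the shape $W(T)\cdot W(q^n-1)<q^{n/2-1}$, i.e.\ the paper's inequality \eqref{sieve}, with the finitely many residual pairs $(q,n)$ cleared by direct computation. Your outline is correct and matches this approach; the only cosmetic difference is that you sum over all admissible linear factors $x-a$ of $x^n-1$, whereas it suffices (as the note does) to work with the single divisor $T=(x^n-1)/(x-1)$.
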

The authors propose an extension of the above theorem for all pairs $(q, n)$ with $n\ge 2$ as a problem (\cite{panario}, Problem 6.2); they conjectured that such elements always exist. However, it was proved in \cite{alizadeh} that for odd $q>3$ and $n=2$, there are no primitive $1-$normal elements of $\F_{q^2}$ over $\F_q$.  The aim of this note is to extend Theorem \ref{mainpanario} to the case when $q=2$ and $n$ is odd. Essentially, we show that the tools used in \cite{panario} to prove Theorem \ref{mainpanario} can be adapted to that case. 

\section{Existence of primitive $1-$normal elements over $\F_2$}
First, we present some definitions and results that will be useful in the rest of this paper. 

\begin{define}
\hfill \break
\begin{enumerate}[(a)]
\item Let $f(x)$ be a monic polynomial with coefficients in $\F_q$. The Euler Phi Function for polynomials over $\F_q$ is given by $$\Phi_q(f)=\left |\left(\frac{\F_q[x]}{\langle f\rangle}\right)^{*}\right |,$$ where $\langle f\rangle$ is the ideal generated by $f(x)$ in $\F_q[x]$. 
\item If $t$ is a positive integer (or a monic polynomial over $\F_q$), $W(t)$ denotes the number of square-free (monic) divisors of $t$.
\end{enumerate}
\end{define}

We have an interesting formula for the number of $k-$normal elements over finite fields:

\begin{lem}[\cite{panario}, Theorem 3.5] \label{count} The number $N_k$ of $k-$normal elements of $\F_{q^n}$ over $\F_q$ is given by
\begin{equation}\label{eq2}\sum_{h|x^n-1\atop{\deg(h)=n-k}}\Phi_q(h),\end{equation}
where the divisors are monic and polynomial division is over $\F_q$.
\end{lem}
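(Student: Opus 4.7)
The plan is to view $\F_{q^n}$ as a cyclic $\F_q[x]$-module, with $x$ acting as the Frobenius $\sigma:\alpha\mapsto \alpha^q$. Since $\alpha^{q^n}=\alpha$ for every $\alpha\in \F_{q^n}$, the annihilator of the whole module is $\langle x^n-1\rangle$. For each $\alpha$ we define its $\F_q$-order $m_\alpha(x)\in \F_q[x]$ as the monic generator of the individual annihilator ideal; in particular $m_\alpha(x)\mid x^n-1$, and $\deg m_\alpha$ equals the $\F_q$-dimension of the cyclic submodule generated by $\alpha$.

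The first step is to show that $\alpha$ is $k$-normal over $\F_q$ if and only if $\deg m_\alpha=n-k$. The case $k=0$ is the classical characterization (\cite{lidl}, Theorem 2.39), which asserts that $\gcd(g_\alpha(x),x^n-1)=1$ precisely when $m_\alpha(x)=x^n-1$. The same argument, based on rewriting the coefficients of $g_\alpha$ in terms of Frobenius powers of $\alpha$ and reducing modulo $x^n-1$, extends to the identity $\deg\gcd(g_\alpha(x),x^n-1)=n-\deg m_\alpha(x)$. A useful by-product is that this gcd, though a priori taken in $\F_{q^n}[x]$, in fact has coefficients in $\F_q$.

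The second step is to count, for each monic divisor $f(x)$ of $x^n-1$, the number of $\alpha\in \F_{q^n}$ with $m_\alpha=f$. Here the Normal Basis Theorem is essential: the existence of a normal element gives an isomorphism of $\F_q[x]$-modules $\F_{q^n}\cong \F_q[x]/\langle x^n-1\rangle$. Writing $x^n-1=f(x)D(x)$, a short computation in this quotient shows that the residue class of $a(x)\in \F_q[x]$ has annihilator exactly $\langle f\rangle$ if and only if $a(x)\equiv D(x)b(x)\pmod{x^n-1}$ for some $b(x)$ that is a unit modulo $f(x)$; there are $\Phi_q(f)$ such classes.

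Combining the two steps, the number of $k$-normal elements equals the sum of $\Phi_q(f)$ over monic divisors $f$ of $x^n-1$ with $\deg f = n-k$, which is precisely the claimed formula. I expect the main technical obstacle to be the first step, namely the degree identity linking $m_\alpha$ and $\gcd(g_\alpha,x^n-1)$: although the $k=0$ case is textbook material, the general statement requires a careful analysis of the $\F_{q^n}[x]$-gcd and a verification that it continues to detect the Frobenius annihilator structure over $\F_q$ when the gcd is a nontrivial proper divisor of $x^n-1$.
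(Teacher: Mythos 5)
Your proof is correct, but note that the paper itself offers no proof of this lemma --- it is quoted verbatim from \cite{panario} (Theorem 3.5). Your argument --- viewing $\F_{q^n}$ as a cyclic $\F_q[x]$-module under Frobenius, characterizing $k$-normality by $\deg m_\alpha = n-k$ via the identity $\deg \gcd(g_\alpha, x^n-1) = n - \deg m_\alpha$, and then counting the $\Phi_q(f)$ elements of $\F_q$-order $f$ through the isomorphism $\F_{q^n}\cong \F_q[x]/\langle x^n-1\rangle$ --- is essentially the proof given in that reference (their Theorem 3.5 built on their Theorem 3.2), so no gaps to report.
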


In particular, if $n\ge 2$, the number of $1-$normal elements of $\F_{q^n}$ over $\F_q$ is at least equal to $\Phi_q(T)$, where $T=\frac{x^n-1}{x-1}$.

\subsection{A sieve inequality}
The proof of Theorem \ref{mainpanario} is based in an application of the {\it Lenstra-Schoof} method, introduced in \cite{lenstra}; this method has been used frequently in the characterization of elements in finite fields with particular properties like being primitive, normal and of zero-trace. For more details, see \cite{cohen} and \cite{panario}. In particular, from Corollary 5.8 of \cite{panario}, we can easily deduce the following:

\begin{lem}\label{aux2}
Suppose that $q$ is a power of a prime $p$, $n\ge 2$ is a positive integer not divisible by $p$ and $T(x)=\frac{x^n-1}{x-1}$. If
\begin{equation}\label{sieve}W(T)\cdot W(q^n-1)< q^{n/2-1}, \end{equation}
then there exist $1-$normal elements of $\F_{q^n}$ over $\F_q$.
\end{lem}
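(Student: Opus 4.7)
The plan is to invoke the Lenstra--Schoof character sum machinery that underlies Corollary 5.8 of \cite{panario}, yielding the existence of primitive $1$-normal elements of $\F_{q^n}$ over $\F_q$ (the stronger statement which, given the appearance of $W(q^n-1)$ in the hypothesis, is clearly what is intended). Recall that $\alpha\in\F_{q^n}^*$ is simultaneously primitive and $1$-normal iff it has multiplicative order $q^n-1$ and, viewing $\F_{q^n}$ as an $\F_q[x]$-module via the Frobenius action $x\cdot\beta=\beta^q$, its $\F_q[x]$-annihilator equals $T(x)$. The assumption $p\nmid n$ ensures that $x^n-1$ is squarefree over $\F_q$, which is what makes this polynomial-annihilator description of $k$-normality clean.

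Primitivity is detected by M\"obius inversion on divisors of $q^n-1$ via multiplicative characters of $\F_{q^n}^*$, while $1$-normality is detected by polynomial M\"obius inversion on monic divisors of $T$ combined with additive characters of $\F_{q^n}$. Combining the two indicator functions, one writes the count $N$ of primitive $1$-normal elements as a double character sum of the standard Lenstra--Schoof form. The contribution of the trivial characters produces a main term of size $\tfrac{\phi(q^n-1)\Phi_q(T)}{q^n-1}$; each remaining ``sieve'' term is controlled by the Weil-type bound $\bigl|\sum_{\alpha\in\F_{q^n}^*}\chi(\alpha)\psi(\alpha)\bigr|\le q^{n/2}$ for nontrivial mixed character sums, so summing over the $W(q^n-1)W(T)$ sieve terms gives a total error of order $q^{n/2}\,W(q^n-1)\,W(T)$ times the same normalizing constants.

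Positivity of $N$ then reduces to the main term exceeding this error. Tracking the constants $\phi(q^n-1)/(q^n-1)$ and $\Phi_q(T)/q^{n-1}$ arising from the two inversion formulas produces precisely the clean form $W(T)W(q^n-1)<q^{n/2-1}$ stated in the lemma. The principal obstacle is verifying that the exponent works out to exactly $n/2-1$ rather than $n/2$; since the lemma is asserted to be a direct consequence of Corollary 5.8 of \cite{panario}, this amounts to a careful reading of that corollary specialized to $k=1$, and no new ideas beyond the framework of \cite{panario} are required.
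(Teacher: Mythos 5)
Your proposal matches the paper's approach exactly: the paper gives no independent argument, deducing the lemma directly from Corollary 5.8 of \cite{panario}, and your reconstruction of the underlying Lenstra--Schoof sieve (main term $\phi(q^n-1)\Phi_q(T)/(q^n-1)$ coming from the normalization $\Phi_q(T)/q^{n-1}$ over the submodule annihilated by $T$, error at most $q^{n/2}$ per squarefree-pair term, hence exactly the condition $W(T)\cdot W(q^n-1)<q^{n/2-1}$) is faithful to that corollary, including your correct observation that the intended conclusion is the existence of \emph{primitive} $1$-normal elements, which is what the main theorem later uses. One minor caveat: ``$1$-normal iff the annihilator equals $T$'' should be ``if'' --- any degree-$(n-1)$ divisor of $x^n-1$ serves as the annihilator of a $1$-normal element and there are $\gcd(n,q-1)$ such divisors --- but since your count only uses the implication that annihilator $T$ forces $1$-normality, nothing in the argument breaks.
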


Inequality \eqref{sieve} is an essential step in the proof of Theorem \ref{mainpanario} and it was first studied in \cite{cohen2}; under the condition that $n\ge 6$ for $q\ge 11$ and $n\ge 3$ for $3\le q\le 9$, this inequality is not true only for a finite number of pairs $(q, n)$ (see Theorem 4.5 of \cite{cohen2}). Here we extend the study of inequality \eqref{sieve} to the case when $q=2$ and $n$ is odd. First, we have the following:

\begin{propo}\label{aux}
Suppose that $n\ge 3$ is odd and $T(x):=\frac{x^n-1}{x-1}\in \F_2[x]$. Then $W(T)\le 2^{\frac{n+9}{5}}.$
\end{propo}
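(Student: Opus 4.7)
My plan is based on the observation that since $n$ is odd, $\gcd(n, 2) = 1$, so $x^n - 1$ is squarefree over $\F_2$, hence so is $T$, and therefore $W(T) = 2^r$ where $r$ denotes the number of distinct monic irreducible factors of $T$ in $\F_2[x]$. The claim thus reduces to proving $r \le (n+9)/5$.

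To bound $r$, I would control the number of small-degree factors of $T$ explicitly. Let $r_k$ be the number of monic irreducible factors of $T$ of degree $k$. First, $r_1 = 0$: $T(0) = 1$ and $T(1) = n \equiv 1 \pmod 2$, so neither $x$ nor $x-1$ divides $T$. For $k \ge 2$, every degree-$k$ irreducible factor of $T$ arises from some divisor $d > 1$ of $n$ with $\ord_d(2) = k$, each such $d$ contributing $\phi(d)/k$ factors. Since $\ord_d(2) = k$ forces $d \mid 2^k - 1$, the admissible $d$ are very restricted for small $k$. A direct enumeration yields
\[ r_2 \le \tfrac{\phi(3)}{2} = 1, \qquad r_3 \le \tfrac{\phi(7)}{3} = 2, \qquad r_4 \le \tfrac{\phi(5) + \phi(15)}{4} = 3, \]
where for $k = 4$ the divisors of $15$ with multiplicative order exactly $4$ modulo $2$ are $5$ and $15$ (as $\ord_3(2) = 2$).

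To finish, I would split $r = r_2 + r_3 + r_4 + r^\ast$ with $r^\ast := \sum_{k \ge 5} r_k$ and compare degrees. Since $\deg T = n - 1$,
\[ n - 1 \;=\; \sum_{k \ge 2} k\, r_k \;\ge\; 2 r_2 + 3 r_3 + 4 r_4 + 5\, r^\ast, \]
so $r^\ast \le (n - 1 - 2 r_2 - 3 r_3 - 4 r_4)/5$. Adding $r_2 + r_3 + r_4$ to both sides yields
\[ r \;\le\; \frac{n - 1 + 3 r_2 + 2 r_3 + r_4}{5} \;\le\; \frac{n - 1 + 10}{5} \;=\; \frac{n+9}{5}, \]
which is the claim.

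There is no serious obstacle here; the only delicate point is the enumeration giving $r_2 \le 1$, $r_3 \le 2$, $r_4 \le 3$. The weights $3, 2, 1$ on $(r_2, r_3, r_4)$ in the penultimate inequality are precisely those that cancel the $r^\ast$-coefficient, and combined with the three small-degree bounds they contribute a total of $10$ to the numerator, which is where the additive constant $9 = 10 - 1$ in the final estimate comes from. Raising the threshold from degree $5$ to degree $6$ (controlling $r_5$ as well) would give an even sharper asymptotic constant, but the degree-$5$ threshold is the simplest one matching the $1/5$ slope required by the sieve inequality \eqref{sieve}.
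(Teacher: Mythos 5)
Your proof is correct and takes essentially the same approach as the paper: bound the number of irreducible factors of $T$ of degrees $2$, $3$, $4$, then apply the identical weighted degree count with threshold $5$ to obtain $r \le \frac{n-1+3r_2+2r_3+r_4}{5} \le \frac{n+9}{5}$. The only cosmetic difference is that you justify $r_2\le 1$, $r_3\le 2$, $r_4\le 3$ via the cyclotomic factorization and divisors of $2^k-1$, whereas the paper simply counts all irreducible polynomials of those degrees over $\F_2$; the resulting bounds coincide.
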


\begin{proof}
For each $2\le i\le 4$, let $s_i$ be the number of irreducible factors of degree $i$ dividing $T(x)$. Since $n$ is odd, $T(x)$ has no linear factor. By a direct verification we see that the number of irreducible polynomials over $\F_2$ of degrees $2, 3$ and $4$ is $1, 2$ and $3$, respectively. Hence $s_2\le 1, s_3\le 2$ and $s_4\le 3$. In particular, the number of irreducible factors of $T(x)$ over $\F_2$ is at most $$\frac{n-1-2s_2-3s_3-4s_4}{5}+s_2+s_3+s_4=\frac{n-1+3s_2+2s_3+s_4}{5}.$$Since $\frac{n-1+3s_2+2s_3+s_4}{5}\le \frac{n-1+3+4+3}{5}=\frac{n+9}{5}$, we conclude the proof.
\end{proof}

According to Lemma 7.5 in \cite{cohen3}, $W(2^n-1)< 2^{\frac{n}{7}+2}$ if $n$ is odd. In particular, we obtain the following:

\begin{coro}\label{aux3}
Suppose that $n\ne 15$ is odd, $q=2$ and $T(x)=\frac{x^n-1}{x-1}\in \F_2[x]$. For $n>9$, inequality \eqref{sieve} holds.
\end{coro}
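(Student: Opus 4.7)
The plan is to feed two estimates into the sieve inequality \eqref{sieve}: Proposition \ref{aux}, which gives $W(T) \le 2^{(n+9)/5}$, and the bound $W(2^n-1) < 2^{n/7+2}$ for odd $n$ quoted from Lemma 7.5 of \cite{cohen3}. Their product is strictly less than $2^{(n+9)/5 + n/7 + 2}$, so it suffices to verify that
\[
\frac{n+9}{5} + \frac{n}{7} + 2 \le \frac{n}{2} - 1.
\]
Clearing denominators reduces this to $11n \ge 336$, i.e.\ $n \ge 31$. Hence every odd $n \ge 31$ is settled in one stroke, and only the short list $n \in \{11, 13, 17, 19, 21, 23, 25, 27, 29\}$ of small odd values remains to be handled by hand.

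For each of these nine values the crude bound from Proposition \ref{aux} is too lossy, and one must compute $W(T)$ exactly. The decomposition $x^n - 1 = \prod_{d \mid n} \Phi_d(x)$ together with the classical fact that each $\Phi_d$ splits over $\F_2$ into $\phi(d)/\ord_d(2)$ irreducible factors of degree $\ord_d(2)$ determines $W(T)$ from the multiplicative orders $\ord_d(2)$ for $d \mid n$; the values $W(2^n-1)$ are read from the standard factorizations of small Mersenne numbers. A routine case-by-case verification then confirms \eqref{sieve} for each of the nine remaining $n$.

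The main obstacle --- and the reason $n = 15$ must be excluded --- is that \eqref{sieve} genuinely fails there. Indeed $T(x) = \Phi_3 \cdot \Phi_5 \cdot \Phi_{15}$, and since $\ord_{15}(2) = 4$ the factor $\Phi_{15}$ (of degree $8$) splits into two irreducibles of degree $4$, so $T$ has four distinct irreducible factors and $W(T) = 16$. Combined with $2^{15}-1 = 7 \cdot 31 \cdot 151$, whence $W(2^{15}-1) = 8$, the product $W(T) \cdot W(2^{15}-1) = 128$ exceeds $2^{15/2 - 1} \approx 45.3$. The delicate content of the argument therefore lies in checking that none of the other nine small odd values exhibits an analogously bad combination of a highly reducible $T(x)$ and a Mersenne number with many prime factors.
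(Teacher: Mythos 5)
Your proposal is correct and follows essentially the same route as the paper: combine Proposition \ref{aux} with the bound $W(2^n-1)<2^{n/7+2}$ from \cite{cohen3}, reduce \eqref{sieve} to a linear inequality that holds for all odd $n\ge 31$, and dispose of the remaining odd values $11\le n\le 29$, $n\ne 15$, by direct computation (the paper compresses this last step into ``can be verified directly''). One small numerical slip in your discussion of $n=15$: $2^{15/2-1}=2^{6.5}\approx 90.5$, not $45.3$; the conclusion is unaffected, since $W(T)\cdot W(2^{15}-1)=16\cdot 8=128>90.5$, so \eqref{sieve} does indeed fail there, confirming why that case must be excluded.
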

\begin{proof}
Notice that $\frac{n+9}{5}+\frac{n}{7}+2< \frac{n}{2}-1$ for $n\ge 31$. From Proposition \ref{aux} and Lemma 7.5 of \cite{cohen3}, it follows that inequality \eqref{sieve} holds for odd $n\ge 31$. The remaining cases can be verified directly. 
\end{proof}

We are ready to state and prove our result:

\begin{theorem}
Suppose that $n\ge 3$ is odd. Then there exist a primitive $1-$normal element of $\F_{2^n}$ over $\F_2$.
\end{theorem}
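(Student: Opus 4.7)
The plan is to combine the sieve inequality from Corollary \ref{aux3} (via Lemma \ref{aux2}) with a direct treatment of the few remaining small cases. Since Corollary \ref{aux3} establishes \eqref{sieve} for every odd $n>9$ with $n\ne 15$, Lemma \ref{aux2} immediately furnishes a primitive $1$-normal element of $\F_{2^n}$ over $\F_2$ for all such $n$ (reading its conclusion in the strong form supplied by Corollary 5.8 of \cite{panario}, which is the version actually needed here). The theorem is therefore reduced to the five residual cases
\[
n\in\{3,5,7,9,15\}.
\]

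For each of these cases I would verify existence by explicit computation. Fix an irreducible polynomial $f(x)\in\F_2[x]$ of degree $n$, realize $\F_{2^n}=\F_2[x]/\langle f\rangle$, and exhibit a single element $\alpha$ satisfying (i) $\alpha^{(2^n-1)/p}\ne 1$ for every prime $p\mid 2^n-1$ (primitivity), and (ii) $\deg\gcd(x^n-1,g_\alpha(x))=1$ (that is, $1$-normality in the sense of the definition). The factorizations $2^3-1=7$, $2^5-1=31$, $2^7-1=127$, $2^9-1=7\cdot 73$, and $2^{15}-1=7\cdot 31\cdot 151$ make the primitivity check a short list of exponentiations, while the normality check is a single polynomial gcd in $\F_{2^n}[x]$.

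The main obstacle is $n=15$. Here $T$ factors over $\F_2$ as $\Phi_3\cdot\Phi_5\cdot\Phi_{15}$, with $\Phi_{15}$ splitting into two quartics, so $W(T)=2^4=16$ and $W(2^{15}-1)=2^3=8$; the product $128$ narrowly exceeds the threshold $2^{6.5}\approx 90.5$, and the sieve of Lemma \ref{aux2} does not apply. A direct search in $\F_{2^{15}}$ (with only $32767$ nonzero elements) settles this case without difficulty, and a random $\alpha$ in fact succeeds with high probability. A sharper sieve that tracks the specific cyclotomic splitting of $7,31,151$ could in principle eliminate the computation, but the ad hoc verification is the cleanest route and is consistent with how the residual cases of Theorem \ref{mainpanario} were dispatched in \cite{panario}. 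For $n\in\{3,5,7,9\}$ the fields are tiny and a single witness suffices in each case; together with the sieve conclusion, this completes the proof.
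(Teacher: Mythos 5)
Your reduction to the residual cases $n\in\{3,5,7,9,15\}$ is exactly the paper's: Lemma \ref{aux2} together with Corollary \ref{aux3} disposes of every odd $n>9$ with $n\ne 15$, and you are right that the lemma's conclusion must be read in the strong form of Corollary 5.8 of \cite{panario} (primitive $1$-normal, not merely $1$-normal), which is evidently what the paper intends. Where you genuinely diverge is in the five residual cases: you propose to exhibit an explicit witness $\alpha$ in each field and verify primitivity and $1$-normality directly, whereas the paper uses a pigeonhole count --- writing $P=\varphi(2^n-1)$ for the number of primitive elements and noting $N_1\ge \Phi_2\left(\frac{x^n-1}{x-1}\right)$ by Lemma \ref{count}, it checks numerically that $P+N_1>2^n$ for $n=3,5,7,9,15$, which forces the two sets to intersect (for instance, at $n=15$ one has $\varphi(2^{15}-1)=6\cdot 30\cdot 150=27000$ and $\Phi_2(T)=3\cdot 15^3=10125$, whose sum exceeds $32768$). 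Both routes are legitimate, but they buy different things: the paper's argument needs only five evaluations of $\varphi$ and $\Phi_2$, checkable by hand, and requires no choice of irreducible polynomial or field representation; yours would produce concrete certificates, but as written it is only a plan --- no witness $\alpha$ is actually displayed for any of the five fields (in particular for $\F_{2^{15}}$), so to complete it you would have to carry out and record the search, which is a heavier and less transparent verification than the paper's arithmetic check. Separately, your diagnosis of why $n=15$ escapes the sieve ($W(T)\cdot W(2^{15}-1)=16\cdot 8=128>2^{6.5}$, coming from the splitting of $\Phi_{15}$ into two quartics over $\F_2$) is correct and is a detail the paper leaves implicit.
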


\begin{proof}
According to Lemma \ref{aux2} and Corollary \ref{aux3}, this statement is true for $n> 9$  if $n\ne 15$. For the remaining cases $n=3, 5, 7, 9$ and $15$ we use the following argument. Let $P$ be the number of primitive elements of $\F_{2^n}$ and $N_1$ the number of $1-$normal elements of $\F_{2^n}$ over $\F_2$; if $P+N_1> 2^n$, there exists a primitive $1-$normal element of $\F_{2^n}$ over $\F_2$. Notice that $P=\varphi(2^n-1)$ and, according to Lemma \ref{count}, $N_1\ge \Phi_2\left(\frac{x^n-1}{x-1}\right)$. By a direct calculation we see that 
$$\varphi(2^n-1)+\Phi_2\left (\frac{x^n-1}{x-1}\right)>2^n,$$
for $n=3, 5, 7, 9$ and $15$. This completes the proof.
\end{proof}

\end{document}